\documentclass[11pt]{article}
\usepackage{amssymb,latexsym}
\usepackage{tikz}
\usepackage{amsmath}
\usepackage{amsthm}
\usepackage{enumerate}
\newtheorem{theorem}{Theorem}[section]
\newtheorem{corollary}[theorem]{Corollary}

\newtheorem{examples}[theorem]{Examples}

\newtheorem{remark}[theorem]{Remark}

\usepackage{authblk}
\usepackage{tikz}

\providecommand{\keywords}[1]{\text{\textit{Key Words:}} #1}

\providecommand{\Classification}[1]{\text{\textit{2010 MSC:}} #1}

\begin{document}

\author{A. Cherrabi, H. Essannouni, E. Jabbouri and  A. Ouadfel\thanks{Corresponding Author: azzouz.cherrabi@gmail.com}}
 \affil{Laboratory of Mathematics, Computing and Applications-Information Security (LabMia-SI)\\
 Faculty of Sciences, Mohammed V University in Rabat.\\
 Rabat. Morocco.}
\title{On a new extension of the zero-divisor graph}
\date{}

\maketitle
\begin{abstract}In this paper, we introduce a new graph whose vertices are the nonzero zero-divisors of a commutative ring $R$, and for distincts elements $x$ and $y$ in the set $Z(R)^{\star}$ of the nonzero zero-divisors of $R$, $x$ and $y$ are adjacent if and only if $xy=0$ or $x+y\in Z(R)$. We present some properties and examples of this graph, and we study its relationship with the zero-divisor graph  and with a subgraph of the total graph of a commutative ring.
\end{abstract}

\keywords{Finite commutative ring, graph, diameter, girth.}

\Classification{Primary 13A15; Secondary 05C25}

\section*{Introduction}
Throughout this paper, $R$ will denote a commutative ring with $1\neq 0$, $Z(R)$  its set of zero-divisors,
 and $Z(R)^{\star}=Z(R)-\{0\}$.
The concept of a zero-divisor graph of a commutative ring was first introduced by I. Beck in \cite{E},
where he was  interested in colorings of commutative rings.
In I. Beck's work, all the elements of the commutative ring were vertices of the graph, and two distinct
elements $x$ and $y$ are adjacent if and only if $xy=0$. In \cite{D}, D.F. Anderson and P.S. Livingston
modified the definition by considering the graph $\Gamma (R)$ with vertices  the set of
nonzero zero-divisors of a ring $R$, and distinct vertices $x$ and $y$ are adjacent
if and only if $xy=0$. In \cite{C}, D.F. Anderson and A. Badawi introduced the total graph $T(\Gamma (R))$ of
a commutative ring $R$ with all elements of $R$ as vertices, and for distinct $x,y \in R$, the vertices $x$
and $y$ are adjacent if and only if $x+y \in Z(R)$. They also considered the induced subgraph $Z(\Gamma (R))$
with vertices in  $Z(R)$.

As stated in \cite{D},
the purpose of the introduction of the zero-divisor graph was to illuminate the structure of $Z (R)$.
Since  $Z (R)$ is a part of the ring $R$, we think that defining a graph using both operations, i.e.,
addition and multiplication, helps us to better approach the interactions between the elements of this semigroup.
In this context, we introduce, in this article, a new graph, denoted  $\widetilde{\Gamma}(R)$,
as the undirected simple graph  whose vertices are the nonzero zero-divisors of $R$, and for distinct
$x,y\in Z(R)^{\star}$, $x$ and $y$ are adjacent if and only if $xy=0$ or $x+y\in Z(R)$. It is obvious
that $\widetilde{\Gamma}(R)$ is just the union of $\Gamma(R)$ and the induced subgraph
$Z^{\star}(\Gamma (R))$ of $T(\Gamma (R))$ with vertices in $Z(R)^{\star}$ and that $Z^{\star}(\Gamma (R))$
and $\Gamma(R)$ are  spanning subgraphs of $\widetilde{\Gamma}(R)$. We  also note that the vertices of the
subgraph $Z(\Gamma (R))$ studied in \cite{C} are in $Z(R)$ while the vertices of  $Z^{\star}(\Gamma (R))$
are in $Z(R)^{\star}$.

Recall that a simple graph $G=(V,E)$ is connected if there exists a path between any
two distinct vertices. For distinct vertices $x$ and $y$ of $G$, the distance $d(x,y)$ is the length
of a shortest  path connecting $x$ and $y$; if there is no such path, $d(x,y)=\infty$. The diameter
of $G$ is $diam(G)=\sup\{d(x,y)|x,y \in V\, \text{and}\, x\neq y \}$. The girth of $G$, denoted $g(G)$,
is the  length of a shortest cycle in $G$, and $g(G)= \infty$ if $G$ contains no cycles. $G$ is complete
if it is connected with diameter less or equal to one, and $K_n$ denotes the complete graph with $n$ vertices. A basic reference
for graph theory is \cite{F}.\\
As usual, $\mathbb{Z}_n$ denotes the integers modulo $n$. A general reference
 for commutative ring theory is \cite{A}.

In the first section, we give some examples to illustrate the difference between the
graphs $\Gamma(R)$, $Z^{\star}(\Gamma (R))$ and the new graph $\widetilde{\Gamma}(R)$. We also give all the
graphs $\widetilde{\Gamma}(R)$ with $|Z(R)^{\star}|\leq 9$. The second section is devoted to studying the diameter,
the girth of $\widetilde{\Gamma}(R)$, and the relation between this graph and the two others  spanning subgraphs.
In the third section, we complete this study by giving some other properties of our graph.

\section{Examples}

We start with examples that illustrate the difference between the three graphs.  This can be seen using the figures corresponding to the two cases  $R=\mathbb{Z}_6$ and $R=\mathbb{Z}_2\times \mathbb{Z}_4$.

\begin{tikzpicture}
\draw (0,0)--(1,1);
\draw (1,1)--(2,0);
\draw (0,0) node[below]{$2$}node{$\bullet$};
\draw (1,1) node[above]{$3$}node{$\bullet$};
\draw (2,0) node[below]{$4$}node{$\bullet$};
\draw (1,2) node[above] {$\Gamma(\mathbb{Z}_6)$};
\end{tikzpicture}\hspace{1cm}
\begin{tikzpicture}
\draw (0,0)--(2,0);
\draw (0,0) node[below]{$2$}node{$\bullet$};
\draw (1,1) node[above]{$3$}node{$\bullet$};
\draw (2,0) node[below]{$4$}node{$\bullet$};
\draw (1,2) node[above] {$Z^{\star}(\Gamma(\mathbb{Z}_6))$};
\end{tikzpicture}\hspace{1cm}
\begin{tikzpicture}
\draw (0,0)--(2,0);
\draw (0,0)--(1,1);
\draw (1,1)--(2,0);
\draw (0,0) node[below]{$2$}node{$\bullet$};
\draw (1,1) node[above]{$3$}node{$\bullet$};
\draw (2,0) node[below]{$4$}node{$\bullet$};
\draw (1,2) node[above] {$\widetilde{\Gamma}(\mathbb{Z}_6)$};
\end{tikzpicture}

\vspace{1cm}

\begin{tikzpicture}
\draw (0,0)--(0,1);
\draw (1,2)--(2,1);
\draw (2,0)--(0,1);
\draw (2,1)--(0,1);
\draw (2,1)--(1,2);
\draw (0,1)--(2,0);
\draw (0,1)--(2,1);

\draw (0,0) node[below]{$(0,3)$} node{$\bullet$};
\draw (2,0) node[below]{$(0,1)$} node{$\bullet$};
\draw (0,1) node[left]{$(1,0)$}node{$\bullet$};
\draw (2,1) node[right]{$(0,2)$}node{$\bullet$};
\draw (1,2) node[above]{$(1,2)$}node{$\bullet$};
\draw (1,3) node[above] {$\Gamma(\mathbb{Z}_2\times \mathbb{Z}_4)$};
\end{tikzpicture}
\begin{tikzpicture}
\draw (2,0)--(2,1);
\draw (2,0)--(0,0);

\draw (0,1)--(2,1);

\draw (2,1)--(0,0);
\draw (2,1)--(1,2);

\draw (0,1)--(1,2);

\draw (0,0) node[below]{$(0,3)$} node{$\bullet$};
\draw (2,0) node[below]{$(0,1)$} node{$\bullet$};
\draw (0,1) node[left]{$(1,0)$}node{$\bullet$};
\draw (2,1) node[right]{$(0,2)$}node{$\bullet$};
\draw (1,2) node[above]{$(1,2)$}node{$\bullet$};
\draw (1,3) node[above] {$Z^{\star}(\Gamma(\mathbb{Z}_2\times \mathbb{Z}_4))$};
\end{tikzpicture}
\begin{tikzpicture}
\draw (0,1)--(1,2);
\draw (0,1)--(2,1)--(2,0)--(0,1);
\draw (0,0)--(2,1)--(1,2);
\draw (0,1)--(0,0)--(2,0);
\draw (0,0) node[below]{$(0,3)$} node{$\bullet$};
\draw (2,0) node[below]{$(0,1)$} node{$\bullet$};
\draw (0,1) node[left]{$(1,0)$}node{$\bullet$};
\draw (2,1) node[right]{$(0,2)$}node{$\bullet$};
\draw (1,2) node[above]{$(1,2)$}node{$\bullet$};
\draw (1,3) node[above] {$\widetilde{\Gamma}(\mathbb{Z}_2\times \mathbb{Z}_4$)};
\end{tikzpicture}
\smallskip

Based on the list given in \cite{G} of   commutative rings $R$ with $n\leq 9$, where $n=|Z(R)^{\star}|$, we see that:
  \begin{itemize}
    \item[--] If $n\in \{1,2,3,4,6,8\}$, $\widetilde{\Gamma}(R)$ is the complete graph $K_n$.
    \item[--] If $n\in \{5,7\}$, we obtain two non-isomorphic graphs, one of which is the complete graph $K_n$, and the other is not complete. For $n=5$, the non-complete graph is obtained by the non-isomorphic rings $\mathbb{Z}_2\times \mathbb{Z}_4$, and $\mathbb{Z}_2\times \mathbb{Z}_2[x]/(x^2)$ (figure 1). For $n=7$, the  non-complete graph is obtained by the non-isomorphic rings $\mathbb{Z}_{12}$ and $\mathbb{Z}_3\times  \mathbb{Z}_2[x]/(x^2)$ (figure 2).
    \item[--] If $n=9$, we obtain three non-isomorphic graphs, one of which is the complete graph $K_9$, and the others are not complete. The two non-complete graphs are obtained respectively by the  non-isomorphic rings  $\mathbb{Z}_4 \times \mathbb{F}_4$ and $\mathbb{Z}_2[x]/(x^2) \times \mathbb{F}_4$ (figure 3), and by $\mathbb{Z}_2\times \mathbb{Z}_2\times \mathbb{Z}_3$ (figure 4).
  \end{itemize}

\begin{center}
  Figure 1: the only non-complete graph obtained when $|Z(R)^{\star}|=5$.
  \begin{tikzpicture}
\draw (0,1)--(1,2);
\draw (0,1)--(2,1)--(2,0)--(0,1);
\draw (0,0)--(2,1)--(1,2);
\draw (0,1)--(0,0)--(2,0);
\draw (0,0)  node{$\bullet$};
\draw (2,0)  node{$\bullet$};
\draw (0,1) node{$\bullet$};
\draw (2,1) node{$\bullet$};
\draw (1,2) node{$\bullet$};
\end{tikzpicture}
\end{center}

\begin{center}
  Figure 2: the only non-complete graph obtained when $|Z(R)^{\star}|=7$.
  \begin{tikzpicture}
   \newdimen\R
   \R=1cm
   \draw (0:\R)
      \foreach \x in {52,104,...,360} {  -- (\x:\R) }
               -- cycle (360:\R) node{$\bullet$}  
               -- cycle (308:\R)  node{$\bullet$} 
               -- cycle (256:\R) node{$\bullet$} 
               -- cycle (204:\R) node{$\bullet$} 
               -- cycle  (152:\R) node{$\bullet$}
               -- cycle  (101:\R) node{$\bullet$} 
               -- cycle  (51:\R) node{$\bullet$}  ; 

      \draw (204:\R)-- (101:\R);
      \draw (204:\R)-- (51:\R);

      \draw (360:\R)-- (256:\R);
      \draw (360:\R)-- (101:\R);

      \draw (256:\R)-- (51:\R);
      \draw (256:\R)-- (101:\R);
      \draw (256:\R)-- (308:\R);
      \draw (256:\R)--(152:\R);

      \draw (256:\R)--(101:\R);
      \draw (256:\R)--(308:\R);
      \draw (256:\R)--(152:\R);
      \draw (256:\R)--(308:\R);

      \draw (101:\R)--(308:\R);
      \draw (101:\R)--(152:\R);

      \draw (308:\R)--(51:\R);

      \draw (152:\R)--(51:\R);
\end{tikzpicture}
\end{center}

\begin{center}
  Figure 3: the first non-complete graph obtained when $|Z(R)^{\star}|=9$.
  \begin{tikzpicture}
   \newdimen\R
   \R=1.2cm
   \draw (0:\R)
      \foreach \x in {40,80,...,360} {  -- (\x:\R) }
               -- cycle (360:\R) node{$\bullet$}  
               -- cycle (320:\R)  node{$\bullet$}
               -- cycle (280:\R) node{$\bullet$} 
               -- cycle (240:\R) node{$\bullet$} 
               -- cycle  (200:\R) node{$\bullet$} 
               -- cycle  (160:\R) node{$\bullet$} 
               -- cycle  (120:\R) node{$\bullet$}  
               -- cycle  (80:\R) node{$\bullet$}   
               -- cycle  (40:\R) node{$\bullet$};   

      \draw (360:\R)--(80:\R);
      \draw (360:\R)--(320:\R);
      \draw (360:\R)--(280:\R);
      \draw (360:\R)--(240:\R);

      \draw (240:\R)--(320:\R); 
      \draw (240:\R)--(360:\R);
      \draw (240:\R)--(40:\R);
      \draw (240:\R)--(80:\R);
      \draw (240:\R)-- (120:\R);
      \draw (240:\R)--(160:\R);

      \draw (280:\R)--(40:\R); 
      \draw (280:\R)--(80:\R);
      \draw (280:\R)--(120:\R);
      \draw (280:\R)--(160:\R);
      \draw (280:\R)-- (200:\R);

      \draw (320:\R)--(40:\R); 
      \draw (320:\R)--(80:\R);
      \draw (320:\R)--(120:\R);
      \draw (320:\R)--(160:\R);
      \draw (320:\R)-- (200:\R);

      \draw (80:\R)--(160:\R); 
      \draw (80:\R)--(200:\R);

      \draw (120:\R)--(200:\R); 
\end{tikzpicture}
\end{center}

\begin{center}
  Figure 4: the second non-complete graph obtained when $|Z(R)^{\star}|=9$.
  \begin{tikzpicture}
   \newdimen\R
   \R=1.2cm
   \draw (0:\R)
      \foreach \x in {40,80,...,360} {  -- (\x:\R) }
               -- cycle (360:\R) node{$\bullet$}  
               -- cycle (320:\R)  node{$\bullet$} 
               -- cycle (280:\R) node{$\bullet$} 
               -- cycle (240:\R) node{$\bullet$} 
               -- cycle  (200:\R) node{$\bullet$}
               -- cycle  (160:\R) node{$\bullet$} 
               -- cycle  (120:\R) node{$\bullet$}  
               -- cycle  (80:\R) node{$\bullet$}   
               -- cycle  (40:\R) node{$\bullet$} ;  

      \draw (360:\R)--(80:\R);
      \draw (360:\R)--(120:\R);
      \draw (360:\R)--(160:\R);
      \draw (360:\R)--(200:\R);
      \draw (360:\R)--(240:\R);
      \draw (360:\R)--(280:\R);

      \draw (240:\R)--(320:\R); 
      \draw (240:\R)--(360:\R);
      \draw (240:\R)--(80:\R);
      \draw (240:\R)-- (120:\R);
      \draw (240:\R)--(160:\R);

      \draw (280:\R)--(40:\R); 
      \draw (280:\R)--(80:\R);
      \draw (280:\R)--(120:\R);
      \draw (280:\R)--(160:\R);
      \draw (280:\R)-- (200:\R);

      \draw (320:\R)--(40:\R); 
      \draw (320:\R)--(80:\R);
      \draw (320:\R)--(120:\R);
      \draw (320:\R)--(160:\R);
      \draw (320:\R)-- (200:\R);

      \draw (80:\R)--(200:\R); 

      \draw (40:\R)--(120:\R); 
      \draw (40:\R)--(160:\R);
      \draw (40:\R)--(200:\R);

      \draw (120:\R)--(200:\R); 
\end{tikzpicture}
\end{center}

\section{Diameter, girth and the relation between $\widetilde{\Gamma}(R)$ and the others graphs}

Since $\Gamma(R)$ is connected with $diam(\Gamma(R))\leq 3$ (see \cite{D}) and $\Gamma(R)$ is a spanning subgraph of $\widetilde{\Gamma}(R)$, it follows that $\widetilde{\Gamma}(R)$ is also connected with diameter at most $3$. On the other hand, in general, $Z^{\star}(\Gamma(R))$ is a non-connected spanning subgraph of $\widetilde{\Gamma}(R)$ (see the example  where $R= \mathbb{Z}_6$ cited in Section 1). However,  the following result shows that $diam(\widetilde{\Gamma}(R))\leq 2$.

\begin{theorem}If $x$ and $y$ are distinct vertices of $\widetilde{\Gamma}(R)$, then $d(x,y)\leq 2$. Consequently, the graph $\widetilde{\Gamma}(R)$ is connected and $diam(\widetilde{\Gamma}(R))\leq 2$.
\end{theorem}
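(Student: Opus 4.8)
The plan is to show that any two distinct, non-adjacent vertices always have a common neighbour, which immediately yields $d(x,y)\le 2$ for every pair and hence connectedness with $diam(\widetilde{\Gamma}(R))\le 2$. So I would fix distinct $x,y\in Z(R)^{\star}$ and assume they are non-adjacent, that is $xy\neq 0$ and $x+y\notin Z(R)$; it then suffices to produce a vertex $z\notin\{x,y\}$ adjacent to both $x$ and $y$. Since $x,y\in Z(R)^{\star}$, I would first choose nonzero elements $a,b\in R$ with $ax=0$ and $by=0$.

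The natural candidate is $z=xy$. First I would check that it is a vertex: $z\neq 0$ by non-adjacency, while $az=(ax)y=0$ with $a\neq 0$ shows $z\in Z(R)$, so $z\in Z(R)^{\star}$. The key idea is then to verify adjacency through the \emph{additive} condition rather than the multiplicative one. Writing $x+z=x(1+y)$, one gets $a(x+z)=(ax)(1+y)=0$, so $x+z\in Z(R)$ and $x$ is adjacent to $z$; symmetrically, $y+z=y(1+x)$ satisfies $b(y+z)=(by)(1+x)=0$, so $y+z\in Z(R)$ and $y$ is adjacent to $z$. Provided $z\notin\{x,y\}$, the path $x-z-y$ witnesses $d(x,y)\le 2$.

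The \textbf{main obstacle} is the degenerate case where $z=xy$ coincides with $x$ or with $y$, since then $x-z-y$ is not a genuine length-two path. Note both cannot hold at once, as $xy=x=y$ would contradict $x\neq y$. Suppose $xy=x$. Grouping $b(xy)$ in two ways gives $bx=b(xy)=(by)x=0$, so the nonzero element $b$ annihilates both $x$ and $y$; moreover $b\neq x$ (otherwise $by=xy=x\neq 0$) and $b\neq y$ (otherwise $bx=yx=x$, forcing $x=0$). Hence $b\in Z(R)^{\star}\setminus\{x,y\}$ is a common neighbour via the multiplicative condition, and $x-b-y$ gives $d(x,y)\le 2$. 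The case $xy=y$ is symmetric, using $a$ in place of $b$. Combining the generic case with these two degenerate cases establishes $d(x,y)\le 2$ for every pair of distinct vertices, whence the stated consequence on connectedness and diameter follows.
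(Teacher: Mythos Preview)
Your argument is correct, but it follows a different route from the paper's. The paper picks annihilators $a,b\in Z(R)^{\star}$ with $ax=0$, $by=0$ and splits on whether $ab\neq 0$ or $ab=0$: in the first case $ab(x+y)=0$ forces $x+y\in Z(R)$, so $x$ and $y$ are already adjacent; in the second case $a(b+x)=ab+ax=0$ shows $b+x\in Z(R)$, and since $by=0$ the vertex $b$ serves as a common neighbour, giving the path $x\!-\!b\!-\!y$. No further case analysis is needed (the degenerate possibilities $b=x$ or $b=y$ immediately collapse to $d(x,y)=1$).

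Your approach instead manufactures the common neighbour from $x$ and $y$ themselves, taking $z=xy$ and observing that $x+z=x(1+y)$ and $y+z=y(1+x)$ are annihilated by $a$ and $b$ respectively. This is a nice idea and makes the adjacency to \emph{both} endpoints transparent via the additive condition, but it forces you to handle the collisions $xy=x$ and $xy=y$ separately, where you fall back on the annihilator $b$ (resp.\ $a$) anyway. The paper's dichotomy on $ab$ is shorter and avoids that detour; your version has the minor conceptual advantage that in the generic case the common neighbour is canonical (it depends only on $x,y$, not on the choice of $a,b$).
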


\begin{proof}Let $x,y\in Z(R)^{\star}$; so there exists $a,b\in Z(R)^{\star}$ such that $ax=0$ and $by=0$. If $ab\neq 0$, then $ab(x+y)=0$, and so $x$ and $y$ are adjacent. If $ab=0$, then $a(b+x)=0$, so $x-b-y$.
\end{proof}

Notice that the diameter of $\widetilde{\Gamma}(R)$ can be $0$ (case where $R=\mathbb{Z}_4$). Also, the examples given in Section 1 show that the diameter of $\widetilde{\Gamma}(R)$ can be one (case where $R=\mathbb{Z}_6$) or two (case where $R=\mathbb{Z}_2\times \mathbb{Z}_4$).\medskip

 We know that $g(\Gamma(R))\in \{3,4,\infty\}$ (see \cite {D}), and, in general, $g(Z^{\star}(\Gamma(R)))=\infty$. However, in the following result, we show that  $g(\widetilde{\Gamma}(R))=3$.

\begin{theorem}If $|Z(R)^{\star}| \geq 3$, then $g(\widetilde{\Gamma}(R))=3$.
\end{theorem}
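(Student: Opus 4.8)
Since $\widetilde{\Gamma}(R)$ is a simple graph we have $g(\widetilde{\Gamma}(R))\ge 3$ automatically, so the entire content is to exhibit one triangle. The basic tool I would isolate first is that additive edges turn ideals into cliques: if $I$ is an ideal of $R$ with $I\subseteq Z(R)$, then any two distinct nonzero $u,v\in I$ satisfy $u+v\in I\subseteq Z(R)$ and are therefore adjacent. Applying this to the annihilator $\operatorname{Ann}(a)=\{r\in R:ra=0\}$ of a vertex $a$ (an ideal contained in $Z(R)$) shows that $\operatorname{Ann}(a)\setminus\{0\}$ is a clique; moreover $a$ is joined multiplicatively ($ar=0$) to every nonzero element of $\operatorname{Ann}(a)$. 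Hence $\{a\}\cup(\operatorname{Ann}(a)\setminus\{0\})$ is always a clique of $\widetilde{\Gamma}(R)$, and a triangle will appear as soon as it has three vertices.

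The plan is then to split on the sizes of annihilators. If some vertex $a$ has $|\operatorname{Ann}(a)|\ge 3$ and $a^{2}\ne 0$, then $a\notin\operatorname{Ann}(a)$ and the clique $\{a\}\cup(\operatorname{Ann}(a)\setminus\{0\})$ already has at least three vertices; if instead $a^{2}=0$ but $|\operatorname{Ann}(a)|\ge 4$, the additive clique $\operatorname{Ann}(a)\setminus\{0\}$ alone has three vertices. The one delicate residual configuration is $a^{2}=0$ together with $|\operatorname{Ann}(a)|=3$: here $\operatorname{Ann}(a)$ is a group of order $3$, so $\operatorname{Ann}(a)=\{0,a,2a\}$ with $3a=0$. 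I would show this forces $|Z(R)^{\star}|=2$, contradicting the hypothesis. Testing an arbitrary vertex $w$, we have $wa\in\operatorname{Ann}(a)$, so either $wa=0$ (whence $w\in\{a,2a\}$) or $wa\in\{a,2a\}=\{a,-a\}$; the latter rewrites as $(w\mp 1)a=0$, i.e. $w=\pm 1+\nu$ with $\nu$ nilpotent, making $w$ a unit, which is impossible for a zero-divisor.

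The complementary case is that every vertex $a$ satisfies $|\operatorname{Ann}(a)|=2$, say $\operatorname{Ann}(a)=\{0,b\}$. Then $\operatorname{Ann}(b)=\{0,a\}$, and from $ra\cdot b=r(ab)=0$ one gets $Ra\subseteq\operatorname{Ann}(b)=\{0,a\}$, i.e. the principal ideal $(a)$ equals $\{0,a\}$ for every vertex $a$. For distinct vertices $u,v$ this yields $uv\in(u)\cap(v)=\{0\}$, so $uv=0$; hence $\Gamma(R)$, and a fortiori $\widetilde{\Gamma}(R)$, is the complete graph on $Z(R)^{\star}$, which contains a triangle because $|Z(R)^{\star}|\ge 3$.

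I expect the main obstacle to be precisely the boundary configuration $a^{2}=0$, $|\operatorname{Ann}(a)|=3$: the clique machinery stalls at two vertices there, and one genuinely needs the arithmetic observation $2a=-a$ (from $3a=0$) to convert every candidate third vertex into a unit and thereby eliminate the case. Once the ``ideal $\Rightarrow$ clique'' principle and this annihilator dichotomy are in place, everything else is routine bookkeeping, and the argument is self-contained (it does not even invoke the connectedness of $\Gamma(R)$).
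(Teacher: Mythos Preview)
Your proof is correct, but it proceeds along a quite different line from the paper's. The paper argues by a complete/non-complete dichotomy: if $\widetilde{\Gamma}(R)$ is complete on at least three vertices the girth is trivially $3$; otherwise one picks a non-adjacent pair $x,y$, chooses $a,b$ with $ax=by=0$, observes that $ab(x+y)=0$ forces $ab=0$ (since $x+y\notin Z(R)$), and then checks that $x,a,b$ are pairwise distinct and pairwise adjacent, yielding the triangle $x-a-b-x$. That argument is shorter and exploits directly the obstruction $x+y\notin Z(R)$ coming from non-adjacency.

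Your route is more structural: you isolate the principle ``an ideal contained in $Z(R)$ gives a clique'' and apply it to annihilators, so that $\{a\}\cup(\operatorname{Ann}(a)\setminus\{0\})$ is always a clique. This makes the existence of triangles essentially automatic whenever any annihilator is large enough, and reduces the problem to the two thin boundary configurations $|\operatorname{Ann}(a)|=3$ with $a^{2}=0$ (which you correctly eliminate by showing $Z(R)^{\star}=\{a,2a\}$) and the global case $|\operatorname{Ann}(a)|=2$ for every vertex (where you show all principal ideals of zero-divisors are $\{0,a\}$, hence $\Gamma(R)$ is complete). The payoff of your approach is a clearer explanation of \emph{why} triangles are ubiquitous in $\widetilde{\Gamma}(R)$ and a proof that does not even need to produce a non-adjacent pair; the cost is the extra case analysis and the arithmetic detour through $3a=0$. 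Either argument is self-contained and works without any finiteness assumption on $R$.
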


\begin{proof}If $\widetilde{\Gamma}(R)$ is complete, then $g(\widetilde{\Gamma}(R))=3$. Suppose that there exists two distinct vertices $x,y$ such that $x$ and $y$ are not connected. Then there exist $a,b\in Z(R)^{\star}$ with $ax=by=0$. Then $ab(x+y)=0$, and hence $ab=0$, because $x+y\notin Z(R)$. Thus $a(b+x)=0$; therefore $b+x\in Z(R)$. On the other hand, we have $x\neq a$,  if not $bx=ba=0$; so $b(x+y)=0$, and hence $x$ and $y$ are adjacent. Then $a\neq b$. If not, then $ay=by=0$; so $a(x+y)=0$. Then $x$ and $y$ are adjacent. Also,  $x\neq b$. If not, then $xy=by=0$; so  $x$ and $y$ are adjacent. Thus we obtain a $3$-cycle $x-a-b-x$.
\end{proof}
 
Given that the diameter can be zero, one, or two, we will characterize the finite rings $R$ such that the graph $\widetilde{\Gamma}(R)$ is a complete.

\begin{theorem} Let $R$ be a finite ring. Then $\widetilde{\Gamma}(R)$ is a complete graph if and only if $R$ is a local ring, a product of two fields, or $R \cong (\mathbb{Z}_2)^n$.
\end{theorem}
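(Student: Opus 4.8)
The plan is to invoke the structure theorem for finite commutative rings: $R$ decomposes as a product $R \cong R_1 \times \cdots \times R_k$ of finite local rings, with $\mathfrak{m}_i$ the maximal ideal of $R_i$. Throughout I would use two standard facts about a finite commutative ring, namely that an element is a zero-divisor if and only if it is a non-unit, and consequently that in a local factor one has $Z(R_i) = \mathfrak{m}_i$, while in the product $(a_1, \ldots, a_k)$ is a unit precisely when every $a_i$ is a unit. For the forward implication I would simply check the three families directly. If $R$ is local then $Z(R)^{\star} = \mathfrak{m} \setminus \{0\}$, and any two distinct vertices $x,y$ satisfy $x + y \in \mathfrak{m} = Z(R)$, so they are adjacent. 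If $R = F_1 \times F_2$ is a product of two fields, the nonzero zero-divisors are the elements $(a,0)$ and $(0,b)$; two of the same type have a sum lying in $Z(R)$, and two of opposite types have product $0$, so again every pair is adjacent. If $R \cong (\mathbb{Z}_2)^n$, then for distinct vertices $x,y$ either $x + y \neq \mathbf{1}$, in which case $x + y \in Z(R)$, or $x + y = \mathbf{1}$, in which case $y$ is the bitwise complement of $x$ and $xy = 0$; either way they are adjacent.

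For the converse, assume $\widetilde{\Gamma}(R)$ is complete and write $R \cong R_1 \times \cdots \times R_k$. If $k = 1$ then $R$ is local and we are done, so assume $k \geq 2$. The first step is to show that every factor is a field. If some factor, say $R_1$, is not a field, I would pick $m \in \mathfrak{m}_1 \setminus \{0\}$ and consider the two vertices $x = (1, 0, \ldots, 0)$ and $y = (m, 1, \ldots, 1)$: both are nonzero non-units, their product $xy = (m, 0, \ldots, 0)$ is nonzero, and their sum $x + y = (1+m, 1, \ldots, 1)$ is a unit in every coordinate (because $1 + m$ is a unit in the local ring $R_1$), hence $x + y \notin Z(R)$. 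Thus $x$ and $y$ would be non-adjacent, contradicting completeness; so all factors are fields.

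The second step treats $R = F_1 \times \cdots \times F_k$ with all $F_i$ fields. Here I would show that if $k \geq 3$ and some field, say $F_1$, has more than two elements, then completeness fails. Choosing $a \in F_1 \setminus \{0, -1\}$ (possible since $|F_1| \geq 3$), the vertices $x = (1, 0, 1, 1, \ldots, 1)$ and $y = (a, 1, 0, 0, \ldots, 0)$ are nonzero non-units with $xy$ nonzero in the first coordinate, while every coordinate of $x + y$ is nonzero, so $x + y$ is a unit and the two vertices are non-adjacent. Combined with the first step, this forces either $k = 2$ (a product of two fields) or $k \geq 3$ with every $F_i = \mathbb{Z}_2$, that is $R \cong (\mathbb{Z}_2)^n$, which completes the characterization.

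The main obstacle I anticipate is the careful design of the witness pairs in the converse so that $x + y$ is simultaneously a unit in all coordinates while $x$ and $y$ remain zero-divisors and $xy \neq 0$. The delicate point lies in the coordinates not used to create the non-unit property or the nonzero product: naively placing $1$ in both vertices there makes the coordinate sum equal to $2$, which vanishes whenever the residue characteristic is $2$ and would spuriously return $x+y$ to $Z(R)$. This is exactly why I would fill the inert coordinates asymmetrically (a $1$ against a $0$), guaranteeing that each coordinate sum is a unit regardless of characteristic; checking that the chosen pairs are genuinely non-units and nonzero is then routine.
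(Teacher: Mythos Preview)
Your proof is correct and follows essentially the same strategy as the paper: invoke the structure theorem for finite commutative rings and, for the converse, exhibit explicit non-adjacent witness pairs to eliminate unwanted configurations. Your treatment of the case $k\geq 3$ is in fact slightly more economical than the paper's, which first proves $\operatorname{char}(R)=2$ and then shows each factor is $\mathbb{Z}_2$, whereas your single choice $a\in F_1\setminus\{0,-1\}$ handles both steps at once.
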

\begin{proof}
$(\Leftarrow )$ It is obvious that if $R$ is a local ring or a product of two fields, then $\widetilde{\Gamma}(R)$ is a complete graph. Suppose that $R=\mathbb{Z}_2^n$; so $R$ is a boolean ring. Let $x,y\in Z(R)^{\star}$. If $xy\neq 0$, then $xy(x+y)=xy+xy=0$; so $x+y\in Z(R)$.\\
$(\Rightarrow )$ Suppose that $R$ is not local. Since $R$ is finite, $R\cong R_1\times \cdots \times R_n$ and $n\geq 2$, where $R_1,...,  R_n$ are local rings.\\
We claim that all the rings $R_i$ are  fields. If not, there exists an $i$ such that $R_i$ is not a field; so there exists $x_i\in Z(R_{i})^{\star}$. Then consider $x=(1,\dots,x_i,\dots,1), y=(0,\dots,1,\dots 0)\in Z(R)^{\star}$; so $xy\neq 0$ and $x+y\notin Z(R)$. If not, then $1\in \mathfrak{m}_i$, where $\mathfrak{m}_i$ is the maximal ideal of $R_i$.\\
Suppose that $n>2$. Then $char(R)=2$. If not, then consider $x=(0,1, \dots , 1)$, $y=(1,0,1, \dots ,1)$. Then $xy\neq 0$ and $x+y\notin Z(R)$. To finish, we claim that $R\cong \mathbb{Z}_2^n$. If not, there exists an $i$ such that $R_i\not \cong \mathbb{Z}_2$; so there exists $x_i\in R_i$ such that $x_i\notin \{0,1\}$. Then we consider $x=(0,\dots ,x_i,1,\dots,1), y=(1,\dots,x_i+1, 0,\dots,0)$. Hence we obtain $xy\neq 0$ because $x_i(x_i+1)\neq 0$, and $x+y=(1,\dots, 1)\notin Z(R)$ because $2x_i=0$.
\end{proof}

\begin{corollary}Let $R$ be a reduced finite ring such that $R$ is not a field and $R$ is not boolean. Then $\widetilde{\Gamma}(R)$ is a complete graph if and only if $R$ has exactly two maximal ideals.
\end{corollary}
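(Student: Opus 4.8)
The plan is to deduce this corollary directly from Theorem 2.3 together with the structure theory of finite reduced rings. The starting point is the classical fact that a finite commutative reduced ring is isomorphic to a finite product of finite fields; write $R \cong F_1 \times \cdots \times F_n$ with each $F_i$ a finite field. Under this decomposition the maximal ideals of $R$ are precisely the $n$ ideals obtained by replacing a single factor $F_i$ with $0$, so $R$ has exactly $n$ maximal ideals. In particular, the statement \emph{$R$ has exactly two maximal ideals} is equivalent to \emph{$R$ is a product of two fields}, which is one of the three alternatives appearing in Theorem 2.3.

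Next I would show that the two remaining alternatives of Theorem 2.3 are excluded by the hypotheses. Since $R$ is reduced and finite, being local would force $R$ to be a single field (a reduced finite local ring has zero nilradical and a unique maximal ideal, hence is a field); as $R$ is assumed not to be a field, the local case cannot occur, and equivalently $n \geq 2$. The assumption that $R$ is not boolean rules out $R \cong (\mathbb{Z}_2)^n$, because a finite product of fields is boolean exactly when every factor equals $\mathbb{Z}_2$. Thus, among the three possibilities listed in Theorem 2.3, only ``$R$ is a product of two fields'' can hold for our ring $R$.

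Combining these observations yields both implications. For the forward direction, if $\widetilde{\Gamma}(R)$ is complete then Theorem 2.3 leaves ``product of two fields'' as the only admissible case, hence $n=2$ and $R$ has exactly two maximal ideals. For the converse, if $R$ has exactly two maximal ideals then $n=2$, so $R$ is a product of two fields, and Theorem 2.3 gives that $\widetilde{\Gamma}(R)$ is complete. The only point requiring care — the \emph{main obstacle}, though a mild one — is the bookkeeping that keeps the three cases of Theorem 2.3 genuinely disjoint under the present hypotheses: one must confirm that ``not a field'' removes the local case and ``not boolean'' removes the $(\mathbb{Z}_2)^n$ case (note that $\mathbb{Z}_2 \times \mathbb{Z}_2$ would otherwise lie in the overlap), so that completeness is characterised solely by the product-of-two-fields condition, i.e. by having exactly two maximal ideals.
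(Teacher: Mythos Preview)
Your proposal is correct and follows essentially the same route as the paper: both arguments invoke the structure theorem writing a finite reduced ring as a product of $n$ finite fields (so that $n$ equals the number of maximal ideals), then apply Theorem 2.3 after ruling out the local and boolean alternatives via the hypotheses. Your version is somewhat more explicit in separating the three cases and flagging the $\mathbb{Z}_2 \times \mathbb{Z}_2$ overlap, but the substance is the same.
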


\begin{proof}Since $R$ is a reduced  finite ring, $R\cong K_1\times \dots \times K_n$, where $K_i=R/\mathfrak{m}_i$ and $\mathfrak{m}_1,\dots, \mathfrak{m}_n$ are the maximal ideals of $R$. So $\widetilde{\Gamma}(R)\cong \widetilde{\Gamma}(K_1\times \dots \times K_n)$ as graphs. Then, since $R$ is not boolean, $\widetilde{\Gamma}(R)$ is complete if and only if $n=2$.
\end{proof}

It is easy to deduce the following corollary from the preceding theorem.

\begin{corollary} Let $n>1$ be a composite integer. Then $\widetilde{\Gamma}(\mathbb{Z}_n)$ is complete if and only if $n$ is a power of a prime number or $n$ is a product of two distinct primes.
\end{corollary}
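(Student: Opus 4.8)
The plan is to reduce everything to Theorem 2.3 via the Chinese Remainder Theorem. First I would write the prime factorization $n = p_1^{a_1}\cdots p_k^{a_k}$ with the $p_i$ distinct primes and each $a_i \geq 1$, which gives the ring isomorphism $\mathbb{Z}_n \cong \mathbb{Z}_{p_1^{a_1}} \times \cdots \times \mathbb{Z}_{p_k^{a_k}}$. Since $\widetilde{\Gamma}$ depends only on the ring up to isomorphism, it suffices to decide when this product meets one of the three conditions of Theorem 2.3. I would record the two standard facts used repeatedly: $\mathbb{Z}_{p^a}$ is always local (its unique maximal ideal being $(p)$), and it is a field precisely when $a = 1$.

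Next I would examine the three alternatives of Theorem 2.3 in turn. A finite product of rings with $1 \neq 0$ is local if and only if it has a single factor, so $\mathbb{Z}_n$ is local exactly when $k = 1$, i.e. when $n = p^a$ is a power of a prime. For $\mathbb{Z}_n$ to be a product of two fields, the uniqueness of the decomposition into local factors forces $k = 2$ and both $\mathbb{Z}_{p_i^{a_i}}$ to be fields, i.e. $a_1 = a_2 = 1$; hence $n = p_1 p_2$ is a product of two distinct primes. Finally, the alternative $\mathbb{Z}_n \cong (\mathbb{Z}_2)^m$ cannot occur for composite $n$: the additive group of $\mathbb{Z}_n$ is cyclic, whereas that of $(\mathbb{Z}_2)^m$ is cyclic only for $m \leq 1$, and $m = 1$ forces $n = 2$, which is prime. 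Thus this last alternative contributes nothing new.

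To conclude, I would assemble the three cases. For composite $n$, completeness of $\widetilde{\Gamma}(\mathbb{Z}_n)$ holds if and only if $n$ is a prime power or a product of two distinct primes; conversely each of these two forms places $\mathbb{Z}_n$ in the local case or the two-fields case of Theorem 2.3, so $\widetilde{\Gamma}(\mathbb{Z}_n)$ is indeed complete. I do not anticipate a genuine obstacle, since the statement is a direct specialization of Theorem 2.3; the only point requiring a little care is ruling out the boolean alternative $(\mathbb{Z}_2)^m$, which the cyclicity remark above dispatches.
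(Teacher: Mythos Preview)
Your proposal is correct and is precisely the intended approach: the paper itself gives no argument beyond remarking that the corollary is easy to deduce from Theorem~2.3, and your reduction via the Chinese Remainder Theorem together with the case analysis (local, product of two fields, boolean) is exactly the natural way to fill in that deduction. The only extra care you take---ruling out the $(\mathbb{Z}_2)^m$ alternative by cyclicity of the additive group---is a nice touch that the paper leaves implicit.
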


The following corollary determines when  the complete graph  $K_n$ is obtained using the graph $\widetilde{\Gamma}(\mathbb{Z}_k)$.

\begin{corollary} Let $n\in \mathbb{N}^{\star}$.
\begin{enumerate}[(a)]
  \item Suppose that $n$ is even. Then there exists $k\in \mathbb{N}$ such that $\widetilde{\Gamma}(\mathbb{Z}_k)=K_n$ if and only if $n+1$ is a power of a prime number or the even number $n+2$ satisfies the Goldbach's conjecture with two distinct prime numbers.
  \item Suppose that $n$ is odd. Then there exists $k\in \mathbb{N}$ such that $\widetilde{\Gamma}(\mathbb{Z}_k)=K_n$ if and only if $n+1$ is a power of $2$ or $n$ is prime.
\end{enumerate}
\end{corollary}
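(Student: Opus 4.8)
The plan is to reduce the statement to a purely arithmetic computation, using the previous corollary, which tells us that $\widetilde{\Gamma}(\mathbb{Z}_k)=K_n$ (that is, $\widetilde{\Gamma}(\mathbb{Z}_k)$ is complete and nonempty) exactly when $k=p^m$ is a prime power with $m\geq 2$ or $k=pq$ is a product of two distinct primes. The first step is to record the elementary count $|Z(\mathbb{Z}_k)|=k-\varphi(k)$, where $\varphi$ is Euler's totient: the units of $\mathbb{Z}_k$ are the $\varphi(k)$ residues coprime to $k$, and every remaining residue, including $0$, is a zero-divisor. Hence the number of vertices is $n=|Z(\mathbb{Z}_k)^{\star}|=k-\varphi(k)-1$, and the whole problem becomes: which $n$ arise as $k-\varphi(k)-1$ for a $k$ of one of the two admissible shapes?

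Next I would evaluate $k-\varphi(k)-1$ in each case. For $k=p^m$ one has $k-\varphi(k)=p^{m-1}$, so $n+1=p^{m-1}$; as $m\geq 2$ ranges, $n+1$ ranges over all powers $p^{m-1}$ of the prime $p$ with exponent $\geq 1$. For $k=pq$ with $p<q$ one has $k-\varphi(k)=p+q-1$, so $n+2=p+q$. Thus the attainable $n$ are precisely the numbers of the form $p^{m-1}-1$ and the numbers of the form $p+q-2$ with $p,q$ distinct primes, and for the ``if'' direction it suffices to exhibit an explicit $k$: take $k=q^{s+1}$ when $n+1=q^{s}$, take $k=pq$ when $n+2=p+q$, and (for part (b)) take $k=2n$ when $n$ is an odd prime.

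The decisive step is the parity bookkeeping that separates the two itemized cases. From $n+1=p^{m-1}$ one sees that $n$ is odd exactly when $p=2$, in which case $n+1$ is a power of $2$, while $n$ is even exactly when $p$ is odd, in which case $n+1$ is an odd prime power; since an even $n$ forces $n+1$ odd, ``odd prime power'' and ``prime power'' coincide there. From $n+2=p+q$ one sees that $n$ is even exactly when both primes are odd, giving a representation of the even number $n+2$ as a sum of two distinct (odd) primes, whereas $n$ is odd exactly when the smaller prime is $2$, forcing $n=q$ to be an odd prime. Assembling these four observations yields (a) for even $n$ and (b) for odd $n$; the lone boundary value $n=1$ (odd, not prime) is covered by $n+1=2$ being a power of $2$. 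I do not expect a genuine obstacle here: the only care needed is in tracking the distinctness of the two primes and the parity, and in recognizing that the appearance of Goldbach's conjecture is not a gap in the argument but merely a restatement of the (open) question of deciding which even $n+2$ are sums of two distinct primes---the equivalence we prove is unconditional.
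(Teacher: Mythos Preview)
Your proposal is correct and follows essentially the same route as the paper: invoke the preceding corollary to reduce to $k=p^{\alpha}$ or $k=pq$, compute $|Z(\mathbb{Z}_k)^{\star}|$ in each case to obtain $n+1=p^{\alpha-1}$ or $n+2=p+q$, and then sort by parity to separate (a) from (b), constructing the explicit $k$ for the converse. Your use of $k-\varphi(k)-1$ and your slightly more explicit parity bookkeeping (including the remark on $n=1$) are cosmetic additions rather than a different method.
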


\begin{proof}
(a) Suppose that $\widetilde{\Gamma}(\mathbb{Z}_k)=K_n$. Then $k=p^{\alpha}$ or $k=pq$, where $p$ and $q$ are distinct primes. If $k=p^{\alpha}$, then $|Z(\mathbb{Z}_k)^{\star}|=p^{\alpha -1}-1=n$, and $n+1=p^{\alpha -1}$.
If $k=pq$, then $|Z(\mathbb{Z}_k)^{\star}|=p+q-2=n$, and $n+2=p+q$.\\
Conversely, if $n+1=p^{\alpha}$, let $k=p^{\alpha+1}$. Then $\widetilde{\Gamma}(\mathbb{Z}_k)$ is complete and $|Z(\mathbb{Z}_k)^{\star}|=p^{\alpha}-1=n$. If $n+2=p+q$, let $k=pq$. Then $\widetilde{\Gamma}(\mathbb{Z}_k)$ is complete and $|Z(\mathbb{Z}_k)^{\star}|=n$.\\
(b) Suppose that $\widetilde{\Gamma}(\mathbb{Z}_k)=K_n$. Then $k=p^{\alpha}$ or $k=pq$, where $p$ and $q$ are distinct primes. If $k=p^{\alpha}$, then $|Z(\mathbb{Z}_k)^{\star}|=p^{\alpha -1}-1=n$. Then $n+1=p^{\alpha -1}$ and $p=2$ because $n+1$ is even.
If $k=pq$, with $p<q$.  Then $|Z(\mathbb{Z}_k)^{\star}|=p+q-2=n$, and $n+2=p+q$. Therefore $p=2$ because $n$ is odd. Thus $n=q$.
Conversely, if $n+1=2^{\alpha}$ or $n=q$,  let   $k=2^{\alpha+1}$ and  $k=2q$, respectively, where $q$ is an odd prime.
\end{proof}

\begin{examples}\hfill
\begin{itemize}
  \item[--] To get $K_{12}$,  take $k=13^2=169$. Then $\widetilde{\Gamma}(\mathbb{Z}_{169})=K_{12}$. Also, to get $K_{48}$,  take $k=7^3=343$, $k=3\cdot47=141$, $k=7\cdot 43=301$,  $k=13\cdot 37=481$, or $k=19\cdot 31=589$, and then $\widetilde{\Gamma}(\mathbb{Z}_{k})=K_{48}$.
  \item[--] To get $K_7$,  take $k=16$. Then $\widetilde{\Gamma}(\mathbb{Z}_{16})=K_7$. Also, to get $K_{11}$,  take $k=22$. Then $\widetilde{\Gamma}(\mathbb{Z}_{22})=K_{11}$.
  \item[--] There is no $k\in \mathbb{N}$ with $\widetilde{\Gamma}(\mathbb{Z}_{k})=K_{9}$.
\end{itemize}
\end{examples}

In the rest of this section, we determine the cases  when $\widetilde{\Gamma}(R)$ coincides with the spanning subgraphs $\Gamma(R)$ and $Z^{\star}(\Gamma(R))$.

\begin{theorem}Suppose that $R$ is finite. Then $\Gamma(R)=\widetilde{\Gamma}(R)$ if and only if $\Gamma(R)$ is complete.
\end{theorem}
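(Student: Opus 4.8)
The plan is to exploit that $\Gamma(R)$ is a spanning subgraph of $\widetilde{\Gamma}(R)$, so the two graphs coincide precisely when no edge of $\widetilde{\Gamma}(R)$ is missing from $\Gamma(R)$; equivalently, when there is no pair of distinct $x,y \in Z(R)^{\star}$ with $x+y \in Z(R)$ yet $xy \neq 0$. The reverse implication is then immediate: if $\Gamma(R)$ is complete, then, being a spanning subgraph of $\widetilde{\Gamma}(R)$ which itself sits inside the complete graph on $Z(R)^{\star}$, it already carries every possible edge, forcing $\Gamma(R) = \widetilde{\Gamma}(R)$.

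For the forward implication I would argue by contraposition: assuming $\Gamma(R)$ is not complete, I will produce distinct $x,y \in Z(R)^{\star}$ with $x+y \in Z(R)$ and $xy \neq 0$, thereby exhibiting an edge of $\widetilde{\Gamma}(R)$ that is absent from $\Gamma(R)$. Since $R$ is finite, write $R \cong R_1 \times \cdots \times R_n$ with each $R_i$ local of maximal ideal $\mathfrak{m}_i$. The case $n = 1$ is settled by a structural observation rather than a construction: when $R$ is local, $Z(R) = \mathfrak{m}_1$ is an ideal, so $x+y \in Z(R)$ for all $x,y \in Z(R)^{\star}$; hence every two distinct vertices are already adjacent, $\widetilde{\Gamma}(R)$ is complete, and it cannot equal the non-complete $\Gamma(R)$.

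For $n \geq 2$ I would build explicit witnesses inside the product. If $n \geq 3$, take $x = (1,1,0,\dots,0)$ and $y = (1,0,\dots,0)$: both are distinct nonzero zero-divisors (they vanish in the third coordinate, which exists since $n\geq 3$), with $xy = (1,0,\dots,0) \neq 0$, while $x+y = (2,1,0,\dots,0) \in Z(R)$ by virtue of its zero coordinate. If $n = 2$, the hypothesis that $\Gamma(R)$ is not complete rules out $R \cong \mathbb{Z}_2 \times \mathbb{Z}_2$ (whose $\Gamma$ is $K_2$), so some factor, say $R_1$, has at least three elements; choosing $s \in R_1 \setminus \{0,1\}$ and setting $x = (1,0)$, $y = (s,0)$ gives distinct nonzero zero-divisors with $xy = (s,0) \neq 0$ and $x+y = (1+s,0) \in Z(R)$.

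The only delicate points are checking in each case that the chosen elements are genuinely distinct nonzero zero-divisors and, for $n=2$, extracting the element $s$ from the non-completeness hypothesis. The genuinely conceptual step, and the one I expect to be the crux, is recognizing that the local case must be handled by the ideal property of $Z(R)$ (which makes $\widetilde{\Gamma}(R)$ automatically complete there) rather than by an explicit non-edge, since no coordinate construction is available when $n=1$.
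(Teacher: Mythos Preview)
Your proof is correct and follows essentially the same strategy as the paper's: decompose the finite ring as a product of local rings and argue by cases on the number $n$ of factors, using that $Z(R)$ is an ideal when $n=1$ and exhibiting an explicit edge of $\widetilde{\Gamma}(R)\setminus\Gamma(R)$ when $n\geq 2$. The only minor difference is in the $n=2$ case: the paper (arguing directly rather than by contraposition) first shows that both factors must be fields and then invokes the earlier characterization of when $\widetilde{\Gamma}(R)$ is complete, whereas you build the witnessing pair $(1,0),(s,0)$ directly from an element $s\in R_1\setminus\{0,1\}$, which makes your argument self-contained.
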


\begin{proof}It is obvious that if $R$ is a field, then $\Gamma(R)=\widetilde{\Gamma}(R)$; so we suppose that $R$ is not a field. Since $\Gamma(R)$ is a spanning subgraph of $\widetilde{\Gamma}(R)$,  if $\Gamma(R)$ is a complete graph, then $\widetilde{\Gamma}(R)$ is too. So $\Gamma(R)=\widetilde{\Gamma}(R)$.\\
Conversely, since $R$ is a finite ring, $R\cong R_1\times \cdots \times R_n$, where $ R_1, \dots, R_n$ are local rings. Then $n\leq 2$. Indeed, if $n>2$, then $\Gamma(R)\neq \widetilde{\Gamma}(R)$ because $(0,1,0,\dots,0)$ and $(0,1,1,0,\dots,0)$ are adjacent in $\widetilde{\Gamma}(R)$ and are not adjacent in $\Gamma(R)$. If $n=1$, then $R$ is local; so $\Gamma(R)=\widetilde{\Gamma}(R)$ is complete. If $n=2$, we  also claim that $R_1$ and $R_2$ are fields because if, for example, $R_1$ is not a field, then $(x,1)$ and $(0,1)$, where $x\in Z(R_1)^{\star}$, are adjacent in $\widetilde{\Gamma}(R)$, but they are not adjacent in $\Gamma(R)$. Since $R_1$ and $R_2$ are fields, $\Gamma(R)=\widetilde{\Gamma}(R)$ is complete by Theorem 2.4.
\end{proof}

Using the previous theorem and Theorem $3.3 $ of \cite{D}, we obtain.

\begin{corollary}Let $R$ be a finite ring. If $\Gamma(R)=\widetilde{\Gamma}(R)$, then either $R\cong \mathbb{Z}_2\times \mathbb{Z}_2$ or $(R,\mathfrak{m})$ is local with $\mathfrak{m}^2=0$, $char(R)=p$ or $p^2$, and $|\Gamma(R)|=p^n-1$, where $p$ is prime and $n\geq 1$.
\end{corollary}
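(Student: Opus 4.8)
The plan is to deduce the corollary by combining the preceding theorem with the cited structure theorem, doing essentially no new work. First I would note that, since $R$ is finite and $\Gamma(R)=\widetilde{\Gamma}(R)$, the preceding theorem applies and guarantees that $\Gamma(R)$ is a complete graph. This turns the hypothesis into the purely zero-divisor-graph condition that $\Gamma(R)$ is complete, which is exactly the situation analysed in \cite{D}. One may set aside the trivial case where $R$ is a field, in which both graphs are empty.

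Next I would invoke Theorem 3.3 of \cite{D}, the structure theorem for finite rings whose zero-divisor graph is complete. It provides the dichotomy: either $R\cong \mathbb{Z}_2\times\mathbb{Z}_2$, or $(R,\mathfrak{m})$ is local with $\mathfrak{m}^2=0$, $char(R)=p$ or $p^2$, and $|\Gamma(R)|=p^n-1$ for some prime $p$ and some $n\geq 1$. Feeding this dichotomy into the conclusion of the first step yields precisely the assertion to be proved, so the corollary follows at once.

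Should one wish to recover the numerical data directly rather than cite them wholesale, I would argue as follows once $\Gamma(R)$ is known to be complete. The proof of the preceding theorem already shows that $R$ is either a product of two fields or local. In the first case $\Gamma(R)$ is complete bipartite with parts of sizes $|F_1|-1$ and $|F_2|-1$, and such a graph is complete only when both parts are singletons, forcing $R\cong\mathbb{Z}_2\times\mathbb{Z}_2$. In the local case $Z(R)=\mathfrak{m}$, and granting $\mathfrak{m}^2=0$ the remaining data fall out: since $R/\mathfrak{m}$ has characteristic $p$ we have $p\cdot 1\in\mathfrak{m}$, whence $p^2\cdot 1=(p\cdot 1)^2\in\mathfrak{m}^2=0$, so $char(R)=p$ or $p^2$; and $\mathfrak{m}^2=0$ makes $\mathfrak{m}$ a vector space over $R/\mathfrak{m}\cong\mathbb{F}_q$ with $q=p^k$, giving $|\mathfrak{m}|=q^d=p^{kd}$ and hence $|\Gamma(R)|=|\mathfrak{m}|-1=p^n-1$ with $n=kd$.

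The main obstacle is the single genuinely substantial step: establishing $\mathfrak{m}^2=0$ in the local case. Completeness of $\Gamma(R)$ only delivers $xy=0$ for distinct nonzero $x,y\in\mathfrak{m}$, so the possibility of a single element with $x^2\neq 0$ has to be excluded separately, and it is exactly at this point that the exceptional ring $\mathbb{Z}_2\times\mathbb{Z}_2$ enters. Since this delicate matter is precisely what Theorem 3.3 of \cite{D} already settles, the cleanest route is to quote it rather than to reprove it.
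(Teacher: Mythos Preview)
Your proposal is correct and matches the paper's approach exactly: the paper simply states that the corollary follows by combining the preceding theorem with Theorem~3.3 of \cite{D}, which is precisely your first two paragraphs. The additional direct verification you sketch is extra detail not present in the paper, but it is sound and does not deviate from the intended route.
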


\begin{corollary} Let $n>1$ be a composite integer. Then $\Gamma(\mathbb{Z}_n)=\widetilde{\Gamma}(\mathbb{Z}_n)$ if and only if $n=p^2$, where $p$ is prime.
\end{corollary}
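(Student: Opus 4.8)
The plan is to reduce the statement to the completeness of $\Gamma(\mathbb{Z}_n)$ and then read off the arithmetic of $n$ from the local structure of $\mathbb{Z}_n$. Since $n>1$ is composite, $\mathbb{Z}_n$ is a finite ring that is not a field, so the previous theorem (which says that for finite $R$ one has $\Gamma(R)=\widetilde{\Gamma}(R)$ if and only if $\Gamma(R)$ is complete) applies directly: proving the corollary amounts to showing that $\Gamma(\mathbb{Z}_n)$ is complete if and only if $n=p^2$.

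For the forward direction I would invoke the preceding corollary on the structure of rings with $\Gamma(R)=\widetilde{\Gamma}(R)$: it forces $\mathbb{Z}_n\cong\mathbb{Z}_2\times\mathbb{Z}_2$ or $(\mathbb{Z}_n,\mathfrak{m})$ local with $\mathfrak{m}^2=0$. The first case is impossible, since $\mathbb{Z}_n$ is cyclic while $\mathbb{Z}_2\times\mathbb{Z}_2$ is not, so $\mathbb{Z}_n$ is local. Locality of $\mathbb{Z}_n$ is equivalent to $n=p^k$ for a prime $p$, with $k\geq 2$ as $n$ is composite; here $\mathfrak{m}=(p)$ and $\mathfrak{m}^2=(p^2)$, and $\mathfrak{m}^2=0$ in $\mathbb{Z}_{p^k}$ holds precisely when $p^k\mid p^2$, i.e. $k\leq 2$. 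Combining $k\geq 2$ and $k\leq 2$ gives $k=2$, so $n=p^2$.

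For the converse I would simply check that $\mathbb{Z}_{p^2}$ realizes this local situation: its unique maximal ideal is $(p)$, every nonzero zero-divisor is a multiple of $p$, and the product of any two such elements lies in $(p)^2=(p^2)=(0)$. Hence $xy=0$ for all distinct nonzero zero-divisors, i.e. $\Gamma(\mathbb{Z}_{p^2})$ is complete, and then the previous theorem yields $\Gamma(\mathbb{Z}_{p^2})=\widetilde{\Gamma}(\mathbb{Z}_{p^2})$.

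There is no serious obstacle once the two earlier results are in hand; the only points requiring a little care are the exclusion of the $\mathbb{Z}_2\times\mathbb{Z}_2$ alternative (handled by the cyclicity of $\mathbb{Z}_n$) and the translation of the nilpotency condition $\mathfrak{m}^2=0$ into the divisibility constraint $k\leq 2$. An alternative, more self-contained route would bypass the structure corollary and test completeness of $\Gamma(\mathbb{Z}_n)$ by hand: if $n$ has two distinct prime divisors one produces vertices adjacent in $\widetilde{\Gamma}$ but not in $\Gamma$, and if $n=p^k$ with $k\geq 3$ one exhibits two distinct multiples of $p$ whose product is a nonzero element of $\mathbb{Z}_n$; invoking the earlier corollary is, however, cleaner.
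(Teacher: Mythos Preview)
Your proposal is correct and follows exactly the route the paper intends: the corollary is stated without proof, immediately after the theorem characterizing $\Gamma(R)=\widetilde{\Gamma}(R)$ by completeness of $\Gamma(R)$ and the structure corollary forcing $R\cong\mathbb{Z}_2\times\mathbb{Z}_2$ or $(R,\mathfrak m)$ local with $\mathfrak m^2=0$, so specializing these to $R=\mathbb{Z}_n$ is precisely what is expected. Your handling of the two delicate points---ruling out $\mathbb{Z}_2\times\mathbb{Z}_2$ via cyclicity and translating $\mathfrak m^2=0$ into $k\le 2$---is clean and complete.
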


The following result determines when $\widetilde{\Gamma}(R)$ coincides with the other graph $Z^{\star}(\Gamma(R))$.

\begin{theorem}Let $R$ be a finite ring. Then $\widetilde{\Gamma}(R)=Z^{\star}(\Gamma(R))$ if and only if $R$ is local.
\end{theorem}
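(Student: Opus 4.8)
The plan is to reduce the claimed equality to a single arithmetic condition and then settle each direction using the structure of finite commutative rings. Since $Z^{\star}(\Gamma(R))$ is a spanning subgraph of $\widetilde{\Gamma}(R)$ (both have vertex set $Z(R)^{\star}$), the two graphs coincide precisely when every edge of $\widetilde{\Gamma}(R)$ is already an edge of $Z^{\star}(\Gamma(R))$. Because $\widetilde{\Gamma}(R)$ is the union of $\Gamma(R)$ and $Z^{\star}(\Gamma(R))$, this amounts to saying that the edges of $\Gamma(R)$ are contained in those of $Z^{\star}(\Gamma(R))$; that is, for all distinct $x,y\in Z(R)^{\star}$,
\[
xy=0 \ \Longrightarrow\ x+y\in Z(R).
\]
So the first step I would carry out is this reformulation, after which I only need to decide when this implication holds.

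For the implication $(\Leftarrow)$, suppose $R$ is local with maximal ideal $\mathfrak{m}$. As $R$ is finite local, $Z(R)=\mathfrak{m}$, which is an ideal and hence closed under addition. Thus any two vertices $x,y\in Z(R)^{\star}=\mathfrak{m}\setminus\{0\}$ satisfy $x+y\in\mathfrak{m}=Z(R)$, so they are adjacent in $Z^{\star}(\Gamma(R))$; in particular the displayed implication holds trivially and $\widetilde{\Gamma}(R)=Z^{\star}(\Gamma(R))$ (indeed both are complete on $Z(R)^{\star}$).

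For $(\Rightarrow)$, I would argue by contraposition: assuming $R$ is not local, I produce a $\Gamma(R)$-edge that is not a $Z^{\star}(\Gamma(R))$-edge. Since $R$ is finite, write $R\cong R_1\times\cdots\times R_n$ with each $R_i$ local and $n\geq 2$. Consider $x=(1,0,\dots,0)$ and $y=(0,1,\dots,1)$. Both are nonzero zero-divisors (their product is $0$, and neither is $0$ since $n\geq 2$), they are distinct, and $xy=0$, so $x$ and $y$ are adjacent in $\widetilde{\Gamma}(R)$. However $x+y=(1,1,\dots,1)=1$ is a unit, hence $x+y\notin Z(R)$, so $x$ and $y$ are not adjacent in $Z^{\star}(\Gamma(R))$. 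Therefore $\widetilde{\Gamma}(R)\neq Z^{\star}(\Gamma(R))$.

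I do not expect a serious obstacle here: once the equality is rephrased as the edge-containment condition, both directions follow from the fact that $Z(R)$ is an ideal exactly in the local case, together with the product decomposition in the non-local case. The only point requiring a little care is checking that the witnesses $x,y$ in the non-local direction genuinely lie in $Z(R)^{\star}$ and are distinct, which is immediate from $n\geq 2$.
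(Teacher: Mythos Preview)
Your proof is correct and follows essentially the same route as the paper: for the local direction you use that $Z(R)=\mathfrak{m}$ is closed under addition (so $Z^{\star}(\Gamma(R))$ is complete), and for the non-local direction you decompose $R$ as a product of locals and exhibit complementary idempotent-like witnesses with $xy=0$ but $x+y=1$. The only cosmetic difference is your choice $x=(1,0,\dots,0)$, $y=(0,1,\dots,1)$ versus the paper's $x=(1,0,1,\dots,1)$, $y=(0,1,0,\dots,0)$, and your explicit edge-containment reformulation, which the paper leaves implicit.
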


\begin{proof}It is obvious that if $R$ is a field, then $Z^{\star}(\Gamma(R))=\widetilde{\Gamma}(R)$. So we suppose that $R$ is not a field. If $R$ is local, then  $Z^{\star}(\Gamma(R))$ is  complete; so $\widetilde{\Gamma}(R)$ is complete. Conversely, since $R$ is finite, $R\cong R_1\times\cdots\times R_n$, where $R_1,\dots, R_n$ are local rings. We claim that $n=1$. Indeed, if $n>1$, then consider $x=(1,0,1,\dots,1)$ and $y=(0,1,0,\dots,0)$. It is obvious that $x,y\in Z(R)^{\star}$ are adjacent as vertices of $\widetilde{\Gamma}(R)$, but they are not adjacent as vertices of  $Z^{\star}(\Gamma(R))$. Hence $\widetilde{\Gamma}(R)\neq Z^{\star}(\Gamma(R))$.
\end{proof}

\begin{corollary} $\widetilde{\Gamma}(R)=Z^{\star}(\Gamma(R))$ if and only if $Z(R)$ is an ideal of $R$, if and only if $Z^{\star}(\Gamma(R))$ is complete.
\end{corollary}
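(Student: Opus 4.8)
The plan is to prove the stated chain of equivalences by establishing its two links separately. Write (1) for the condition $\widetilde{\Gamma}(R)=Z^{\star}(\Gamma(R))$, (2) for ``$Z(R)$ is an ideal of $R$'', and (3) for ``$Z^{\star}(\Gamma(R))$ is complete''. Since the preceding theorem is stated for finite $R$, I read the corollary in that setting. For the link (1) $\Leftrightarrow$ (2) I would lean on that theorem, which already identifies (1) with $R$ being local; the remaining work is then the purely ring-theoretic fact that, for a finite $R$, locality is the same as $Z(R)$ being an ideal. For the link (2) $\Leftrightarrow$ (3) I would argue directly and formally, since it is really a statement about when the ``sum lies in $Z(R)$'' relation already exhausts all pairs of vertices.

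First I would dispose of (2) $\Leftrightarrow$ (3). The key preliminary observation is that $Z(R)$ is automatically closed under negation and under multiplication by arbitrary elements of $R$: if $ac=0$ with $c\neq 0$, then $(ra)c=0$ and $(-a)c=0$, so $ra,-a\in Z(R)$; moreover $0\in Z(R)$. Hence $Z(R)$ is an ideal exactly when it is closed under addition. Now completeness of $Z^{\star}(\Gamma(R))$ says precisely that $x+y\in Z(R)$ for every pair of distinct vertices $x,y\in Z(R)^{\star}$, which is additive closure of $Z(R)$ restricted to distinct nonzero zero-divisors; the remaining cases needed for full additive closure (namely $x=y$, where $x+y=2x$, and the cases where $x$ or $y$ is $0$) follow from the absorption property and from $0\in Z(R)$. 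Thus (2) and (3) assert the same thing, and this half needs no finiteness.

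Next I would handle (1) $\Leftrightarrow$ (2). By the preceding theorem, (1) holds if and only if $R$ is local, so it suffices to show that for a finite commutative ring $R$ one has $R$ local $\Leftrightarrow$ $Z(R)$ an ideal. Here I would use that in a finite commutative ring every element is either a unit or a zero-divisor (if $a$ is not a zero-divisor, multiplication by $a$ is injective, hence surjective, so $a$ is a unit), whence $Z(R)$ coincides with the set of non-units. Then $R$ is local precisely when its non-units form an ideal, that is, precisely when $Z(R)$ is an ideal. Combining this with the previous paragraph yields the full chain (1) $\Leftrightarrow$ (2) $\Leftrightarrow$ (3).

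I expect the only real subtlety to be the implication (1) $\Rightarrow$ (2) (equivalently (1) $\Rightarrow$ (3)). A naive direct attempt from $\widetilde{\Gamma}(R)=Z^{\star}(\Gamma(R))$ only yields that every product-zero pair is also a sum-in-$Z(R)$ pair, i.e.\ that $\Gamma(R)$ sits inside $Z^{\star}(\Gamma(R))$; promoting this to full completeness of $Z^{\star}(\Gamma(R))$ by hand is awkward, because given distinct $x,y$ with annihilators $a,b$ one is driven into the case $ab=0$, where the sum $x+y$ is not obviously controlled. Routing this implication through the preceding theorem and the ``unit-or-zero-divisor'' dichotomy of finite rings is what keeps the argument clean, so I would rely on that rather than on a combinatorial argument inside the graph.
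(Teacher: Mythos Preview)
Your proposal is correct and is precisely the argument the paper leaves implicit: the corollary is stated without proof immediately after Theorem~2.11, so the intended route is exactly to combine that theorem (local $\Leftrightarrow$ (1)) with the standard finite-ring fact that local $\Leftrightarrow$ $Z(R)$ is an ideal, together with the direct equivalence (2) $\Leftrightarrow$ (3). Your reading of the corollary in the finite setting is the appropriate one, and your handling of the edge cases $x=y$ and $x=0$ in (3) $\Rightarrow$ (2) is clean.
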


\begin{corollary}Let $n>1$ be a composite integer. Then $\widetilde{\Gamma}(\mathbb{Z}_n)=Z^{\star}(\Gamma(\mathbb{Z}_n))$ if and only if $n$ is a power of a prime.
\end{corollary}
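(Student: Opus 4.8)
The plan is to reduce the statement to the finite-ring theorem already proved above characterizing when $\widetilde{\Gamma}(R)=Z^{\star}(\Gamma(R))$. Since $\mathbb{Z}_n$ is finite, that theorem applies verbatim with $R=\mathbb{Z}_n$, and it asserts that $\widetilde{\Gamma}(\mathbb{Z}_n)=Z^{\star}(\Gamma(\mathbb{Z}_n))$ holds if and only if $\mathbb{Z}_n$ is local. Thus the entire content of the corollary collapses to the elementary ring-theoretic equivalence: $\mathbb{Z}_n$ is local if and only if $n$ is a power of a prime.

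To verify this equivalence I would factor $n=p_1^{a_1}\cdots p_k^{a_k}$ into distinct prime powers and invoke the Chinese Remainder Theorem to write $\mathbb{Z}_n\cong\mathbb{Z}_{p_1^{a_1}}\times\cdots\times\mathbb{Z}_{p_k^{a_k}}$. If $n=p^{a}$ is a single prime power, then $\mathbb{Z}_n\cong\mathbb{Z}_{p^{a}}$, which is local with maximal ideal $(p)$. Conversely, if $k\geq 2$, the decomposition exhibits at least two distinct maximal ideals, for instance $(p_1)\times\mathbb{Z}_{p_2^{a_2}}\times\cdots$ and $\mathbb{Z}_{p_1^{a_1}}\times(p_2)\times\cdots$, equivalently a nontrivial idempotent $(1,0,\dots,0)$; hence $\mathbb{Z}_n$ is not local. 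Therefore $\mathbb{Z}_n$ is local precisely when $k=1$, that is, when $n$ is a prime power, and combining this with the theorem above yields the claim.

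There is no real obstacle here: once the finite-ring theorem is in hand, the only thing to check is the local/prime-power dictionary for $\mathbb{Z}_n$, which is standard. The only point worth stating carefully is that, since $n>1$ is assumed composite, ``power of a prime'' forces the exponent to satisfy $a\geq 2$; but this restriction plays no role in the equivalence itself, as it merely excludes the trivial case $n=p$, in which $\mathbb{Z}_n$ is a field and $Z(\mathbb{Z}_n)^{\star}=\emptyset$.
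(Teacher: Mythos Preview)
Your proposal is correct and is exactly the intended argument: the paper states this corollary without proof, as an immediate consequence of the preceding theorem that $\widetilde{\Gamma}(R)=Z^{\star}(\Gamma(R))$ iff the finite ring $R$ is local, together with the standard fact that $\mathbb{Z}_n$ is local iff $n$ is a prime power.
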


\begin{remark}It is obvious that if $\Gamma (R)$  or  $Z^{\star}(\Gamma(R))$ is complete, then $\widetilde{\Gamma}(R)$ is complete. However, the converse is false as shown in the example where $R =\mathbb{Z}_6$.
\end{remark}

\section{Others Properties of $\widetilde{\Gamma}(R)$}

In this section, we give some more properties of $\widetilde{\Gamma}(R)$.

\begin{theorem} If $R$ is a finite ring which is not a field, then there exists  a vertex of $\widetilde{\Gamma}(R)$ which is adjacent to every other vertex.
\end{theorem}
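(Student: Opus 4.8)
The plan is to exhibit one explicit universal vertex using the structure theorem for finite commutative rings, which is already invoked throughout Section~2. First I would write $R \cong R_1 \times \cdots \times R_n$ with each $(R_i, \mathfrak{m}_i)$ a finite local ring; since $R$ is not a field, this product is not a field, so either $n \geq 2$, or $n = 1$ and $R_1$ is not a field. I would record the two routine facts I will need: in a finite commutative ring an element is a zero-divisor exactly when it is a non-unit, and a tuple $(a_1, \dots, a_n)$ is a non-unit if and only if some $a_i \in \mathfrak{m}_i$; hence $Z(R)$ is precisely the set of tuples having at least one coordinate in the corresponding maximal ideal.

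The candidate I would choose is $v = (v_1, 0, \dots, 0)$, where $v_1$ is a nonzero element of the socle $\mathrm{Ann}_{R_1}(\mathfrak{m}_1)$ of $R_1$; equivalently, $v$ spans a minimal ideal of $R$. Such a $v_1$ exists because a finite local ring has nonzero socle: by Nakayama's lemma a minimal nonzero ideal of $R_1$ is annihilated by $\mathfrak{m}_1$, and in particular $v_1 \mathfrak{m}_1 = 0$. I would then verify $v \in Z(R)^{\star}$: it is nonzero, and it is a non-unit because either $n \geq 2$, so $v$ has a zero coordinate, or $n = 1$ with $R_1$ not a field, so $v_1 \in \mathfrak{m}_1$. (When $R_1$ happens to be a field, $\mathfrak{m}_1 = 0$ and $v_1$ may be taken to be $1$, so $v$ is just a standard idempotent; the argument below is uniform in either case.)

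Next I would verify adjacency of $v$ to an arbitrary vertex $w \neq v$ by splitting on the product $vw$. If $vw = 0$, then the condition $xy=0$ gives adjacency at once. The essential case is $vw \neq 0$: the only possibly nonzero coordinate of $vw$ is $v_1 w_1$, so $v_1 w_1 \neq 0$, and the socle relation $v_1 \mathfrak{m}_1 = 0$ forces $w_1 \notin \mathfrak{m}_1$, i.e.\ $w_1$ is a unit. Since $w$ is itself a zero-divisor, some coordinate $w_k$ is a non-unit, and $w_1$ being a unit forces $k \neq 1$ (in particular $n \geq 2$ here); as $v_k = 0$, the $k$-th coordinate of $v + w$ equals $w_k \in \mathfrak{m}_k$, whence $v + w \in Z(R)$ and the condition $x + y \in Z(R)$ gives adjacency. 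This shows $v$ is adjacent to every other vertex.

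The step I expect to be the main obstacle, and really the crux of the argument, is the case $vw \neq 0$, together with the upfront insight of choosing $v_1$ in the socle. That annihilation property is exactly what converts ``$v_1 w_1 \neq 0$'' into ``$w_1$ is a unit,'' which in turn relocates the guaranteed non-unit coordinate of $w$ away from the support of $v$ and produces $v + w \in Z(R)$. Everything else---the decomposition, the characterization of $Z(R)$ in a product, and the check that $v$ is a vertex---I regard as routine.
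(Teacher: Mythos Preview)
Your proof is correct and follows essentially the same strategy as the paper: decompose $R$ as a product of finite local rings and take the universal vertex to be an element of the form $(a_1,0,\dots,0)$, then check adjacency to an arbitrary $w$ by locating a non-unit coordinate of $w$. Your choice of $a_1$ in the socle $\mathrm{Ann}_{R_1}(\mathfrak{m}_1)$ is a mild refinement of the paper's choice of an arbitrary $a_1\in Z(R_1)^{\star}$; it lets you collapse the paper's two-case split (all $R_i$ fields versus some $R_i$ not a field) and its subcases on the index of the non-unit coordinate into the single clean dichotomy $vw=0$ versus $vw\neq 0$.
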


\begin{proof}Since $R$ is finite, $R\cong R_1\times\cdots \times R_n$, where $R_1,\dots, R_n$ are  finite local rings.\\
Case 1: Suppose that all the $R_i$ are fields. If $n=2$, then $\widetilde{\Gamma}(R)$ is complete, and the result is obvious. If $n\geq 3$, let $a=(a_1,0,\dots, 0)\in Z(R)^{\star}$. So for every $x=(x_1,\dots, x_n)\in Z(R)^{\star}$, $a$ and $x$ are adjacent. Indeed, since $x=(x_1,\dots, x_n)$ is in $Z(R)^{\star}$, there exists an $i$ such that $x_i=0$. If $i=1$, then $ax=0$. If $i\neq 1$, then $a+x=(a_1+x_1,\dots, 0,\dots, x_n)\in Z(R)$.\\
Case 2: Suppose that there exists an $i$ such that $R_i$ is not a field. Thus we can suppose that $R_1$ is not a field. Let $a_1\in Z(R_1)^{\star}$ and $a=(a_1,0,\dots, 0)\in R$; we have $a\in  Z(R)^{\star}$. Let $x=(x_1,\dots, x_n)\in Z(R)^{\star}$. If there exists an $i$ such that $x_i=0$, we check as before that $a$ and $x$ are adjacent. If for every $ i$, $x_i\neq 0$, so there exists $j$ such that $x_j\in Z(R_j)^{\star}$. If $j=1$, then $a+x=(a_1+x_1,x_2,\dots, x_n)\in Z(R)$ because $a_1+x_1\in Z(R_1)$, and if $j\neq 1$, then $a+x=(a_1+x_1,x_2,\dots, x_j,\dots, x_n)\in Z(R)$ because $x_j\in Z(R_j)$.
\end{proof}

\begin{remark}
According to the previous theorem, if $R$ is finite, then $\widetilde{\Gamma}(R)$ has a spanning (non-induced) subgraph which is a star graph. In general, the graphs $\Gamma(R)$ and $Z^{\star}(\Gamma(R))$ do not have this property.
\end{remark}

\begin{theorem}\label{thm.tri}\hfill
\begin{enumerate}[(a)]
  \item If $|Z(R)^{\star}|\geq 3$, then every vertex  of $\widetilde{\Gamma}(R)$ is a vertex in a triangle.
  \item If $x$ and $y$ are two distinct, non-adjacent vertices, then there are vertices $z$ and $t$ such that $x-z-y-t$ is a square.
\end{enumerate}
\end{theorem}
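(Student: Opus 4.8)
The plan is to derive (a) from a clique attached to each vertex, and to derive (b) as a quantitative sharpening of the proof of Theorem 2.2; the substantive difficulty is concentrated in one residual configuration of (a).

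For (a), I would fix $x\in Z(R)^\star$ and a nonzero $a$ with $ax=0$, and set $C_x$ to be the set of all nonzero multiples $kx$ ($k\ge 1$) together with all nonzero elements of $\mathrm{ann}(x)$. The first step is to check that $C_x$ is a clique of $\widetilde{\Gamma}(R)$: two multiples $ix,jx$ are adjacent because $a(i+j)x=0$ places $(i+j)x$ in $Z(R)$; a multiple and an annihilator multiply to $0$; and two annihilators $u,v$ are adjacent because $(u+v)x=0$ places $u+v\in Z(R)$. Since $x\in C_x$, as soon as $|C_x|\ge 3$ I may pick two further vertices of $C_x$ and obtain a triangle on $x$. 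The whole of (a) therefore reduces to understanding the vertices with $|C_x|\le 2$.

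The analysis of $|C_x|\le 2$ is the heart of the matter. When $x^2=0$, a short manipulation of the ideal $\mathrm{ann}(x)$ shows that either $\mathrm{ann}(x)=\{0,x\}$, which forces $|R|=4$ and $|Z(R)^\star|=1$ against the hypothesis, or $\mathrm{ann}(x)$ has a further element $u$, in which case $x+u$ is a third member of $C_x$; the remaining possibility with $2x\ne 0$ collapses to a tiny finite ring that $|Z(R)^\star|\ge 3$ excludes. This isolates the one stubborn configuration $2x=0$, $x^2\ne 0$, $\mathrm{ann}(x)=\{0,a\}$, where $C_x=\{x,a\}$ and, as $R=\mathbb{Z}_2\times\mathbb{F}_q$ illustrates, $a+x$ may fail to be a zero-divisor. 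I would dispose of it by splitting on $a^2$, which lies in $\mathrm{ann}(x)=\{0,a\}$. If $a^2=0$, then $z=a+x$ satisfies $az=0$, so $z$ is a vertex and $x-a-z$ is a triangle, the edge $x-z$ coming from $x+z=2x+a=a\in Z(R)$. If $a^2=a$, then $a$ is a nontrivial idempotent, one finds $R\cong\mathbb{Z}_2\times R_2$ with $x$ in the ideal $\{0\}\times R_2\subseteq Z(R)$, and the nonzero elements of this ideal together with $a$ form a clique whose size is at least $3$ exactly because $|Z(R)^\star|\ge 3$ forbids $R_2\cong\mathbb{Z}_2$. This idempotent sub-case, where the triangle must be located inside an ideal contained in $Z(R)$ rather than among the multiples and annihilators of $x$, is the step I expect to be the main obstacle.

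For (b), I would reuse the elements produced in Theorem 2.2. Non-adjacency of $x$ and $y$ means $xy\ne 0$ and $x+y\notin Z(R)$, and there are nonzero $a,b$ with $ax=by=0$; then $ab(x+y)=0$ forces $ab=0$. The key observation is that this already produces two common neighbours of $x$ and $y$: from $a(b+x)=0$ one gets $b+x\in Z(R)$, so $b$ is adjacent to both $x$ and $y$, and symmetrically $b(a+y)=0$ gives $a+y\in Z(R)$, so $a$ is adjacent to both $x$ and $y$. Taking $z=a$ and $t=b$ yields the square $x-a-y-b$, and the only remaining point is that $x,y,a,b$ are four distinct vertices. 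I would verify this by noting that every coincidence forces a forbidden conclusion: $a=b$, $a=x$, or $b=y$ each produces a nonzero element annihilating $x+y$, hence $x+y\in Z(R)$, while $a=y$ or $b=x$ gives $xy=0$ directly. Each contradicts non-adjacency, so the four vertices are distinct and the square exists.
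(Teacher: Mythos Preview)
Your proof is correct in both parts, but the strategies diverge from the paper's in interesting ways.

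For part (b), your argument and the paper's are essentially the same: both take annihilators of $x$ and $y$ and show they are common neighbours, then check distinctness. The only technical difference is that you first deduce $ab=0$ (reusing the mechanism of Theorem~2.1) and then read off the edges from $a(b+x)=0$ and $b(a+y)=0$, whereas the paper keeps $s=x+y$ as a regular element and computes $st(y+z)=0$, $sz(x+t)=0$ directly. Your route is marginally cleaner; the paper's avoids invoking $ab=0$ but pays with an extra factor of $s$ in each calculation.

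For part (a), the approaches are genuinely different. The paper never builds a clique: it splits on whether $x$ has a non-neighbour. If it does, the triangle $x{-}a{-}b{-}x$ already produced in the proof of Theorem~2.2 is reused verbatim. If $x$ is adjacent to everything, the paper picks any two further vertices $y,z$; when they are non-adjacent it locates $a,b$ with $ay=bz=0$, shows $a+z\in Z(R)$ via $sb(a+z)=0$ with $s=y+z$ regular, and obtains the triangle $x{-}a{-}z{-}x$. This is short, uniform, and uses no structural analysis of $R$. Your clique $C_x$ of integer multiples and annihilators is a more constructive idea and gives an explicit triangle in the generic case, but the price is the residual casework when $|C_x|\le 2$: you end up forcing $|R|\in\{4,9\}$ in the $x^2=0$ branch and invoking an idempotent splitting $R\cong\mathbb{Z}_2\times R_2$ in the $a^2=a$ branch. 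All of this is correct (the ``tiny ring'' bounds follow from $Rx\subseteq\mathrm{ann}(x)$ when $x^2=0$, and $\mathrm{ann}(x)=\{0,a\}$ forces $|R_1|=2$ in the idempotent case), but it is noticeably heavier than the paper's two-line reuse of Theorem~2.2. On the other hand, your method actually exhibits large cliques through every vertex, which is more information than the bare existence of a triangle.
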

\begin{proof}(a) Let $x\in Z(R)^{\star}$.\\
Case 1: Suppose there is $y\in Z(R)^{\star}$ such that $x$ and $y$ are not adjacent. Then, with the notation used in the proof of Theorem 2.2, we have $x-a-b-x$ is a triangle.\\
Case 2: Suppose that, for every $  t\in Z(R)^{\star}$, $x$ and $t$ are adjacent. Let $y,z\in Z(R)^{\star}$ such that $x,y$, and $z$ are distinct. We can suppose that $y$ and $z$ are not adjacent. If not, it is obvious that $x-y-z-x$ is a triangle; so $yz\neq 0$ and $y+z\notin Z(R)$. We have $xy\neq 0$ or $xz\neq 0$. If not, then $x(y+z)=0$;  so $y$ and $z$ are adjacent. Suppose that $xy\neq 0$ (the case $xz\neq 0$ is similar). Let $a,b\in Z(R)^{\star}$ such that $ay=bz=0$ and $s=y+z$. Since $y$ and $z$ are not adjacent, $s\notin Z(R)$. So  $sb(a+z)=sba=(y+z)ba=yba=0$, and since $y,z$ are not adjacent $sb=(y+z)b\neq 0$. Hence $a+z\in Z(R)$. Since $xy\neq 0$, $a\neq x$ and since $yz\neq 0$, $a\neq z$, then $x-a-z-x$  is a triangle.

(b) Let $x,y\in Z(R)^{\star}$ such that $x\neq y$, $xy\neq 0$, and $x+y\notin Z(R)$. So there are $z,t\in Z(R)^{\star}$ such that $xz=0$ and $yt=0$. Let $s=x+y$. Then $st(y+z)=stz=t(sz)=t(yz)=0$, with $st\neq 0$ because $s\notin Z(R)$. Thus $y+z\in Z(R)$. In the same way, $sz(x+t)=szt=z(st)=z(xt)=0$. Thus $x+t\in Z(R)$. On the other hand, the four vertices $x,y,z$ and $t$ are distinct. Indeed, $x\neq t$. If not, then $x$ and $y$ are adjacent. And $x\neq z$. If not, then $x$ and $y$ are adjacent. And $y\neq t$. If not, then $x$ and $y$ are adjacent. And $t\neq z$. If not, $z(x+y)=0$, then $x$ and $y$ are adjacent. Finally,  $y\neq z$. If not, then $xy=0$. Thus $x-z-y-t$ is a square.
\end{proof}

 As it can be seen, the properties of Theorem \ref{thm.tri} are satisfied for either  finite or infinite rings.\medskip

Following \cite{H}, a graph $G=(V,E)$ is said to be hypotriangulated if for every
2-path $x-z-y$, $x$ and $y$ are adjacent or there exists a vertex $t\neq z$ such that $x-t-y$ is a 2-path. Thus, from Theorem \ref{thm.tri}, $\widetilde{\Gamma}(R)$ is a hypotriangulated graph.\\

We end this paper with the following remark.

\begin{remark}   We recall that a chord of a cycle $C=(V(C),E(C))$ in a graph $G=(V,E)$ is an edge in $E\setminus E(C)$ such that both of whose ends lie on $V(C)$. Also, a graph is triangulated (or chordal) if every cycle of length greater than three has a chord. We ask if $\widetilde{\Gamma}(R)$ is triangulated. The answer is negative as shown by the following example. Let $n=2\cdot 3\cdot 5\cdot 13=390$. It is obvious that  $C=(2-3-5-8-2)$ is a cycle in $\widetilde{\Gamma}(\mathbb{Z}_n)$ and $C$ does not have a chord.
\end{remark}

\bigskip

\noindent {\bf Acknowledgement.} \\
We would like  to thank the referee and  Driss Bennis for careful reading and helpful comments.


\end{document}